\theoremstyle{plain}
\numberwithin{equation}{section}
\newtheorem{thm}{Theorem}[section]
\newtheorem{theorem}[thm]{Theorem}
\newtheorem{lemma}[thm]{Lemma}
\newtheorem{definitions}[thm]{Definitions}
\newtheorem{corollary}[thm]{Corollary}
\begin{document}
\fancyhead{}
\renewcommand{\headrulewidth}{0pt}
\fancyfoot{}
\fancyfoot[LE,RO]{\medskip \thepage}
\fancyfoot[LO]{\medskip MONTH YEAR}
\fancyfoot[RE]{\medskip VOLUME , NUMBER }

\setcounter{page}{1}

\title[Infinite]{notes on approximate golden spirals with whirling squares} 
\author{Carlos Silveira}
\address{Department of Biotechnology\\
                Federal University of Paraiba - UFPB - Brazil\\}
\email{carlos.silveira@cbiotec.ufpb.br\\}

\begin{abstract}
We have extended some known results of the approximate golden spirals to generalized m-spirals built with whirling squares for any $m$ ratio ($m>1$). In particular, we have proved that circumscribed circles around squares intercept the pole. This implies that poles of any m-spirals lie on a circumscribed circle about the first square. We show here that these and other fascinating properties are not exclusive of golden rectangles. The classical golden constructors may not be alone.
\end{abstract}

\maketitle

\section{Introduction}
\vspace{0.1cm}

Consider a line segment $\overline{A B}$ and a sectioning point $C$. The new segments $\overline{A C}$ and $\overline{C B}$ can express a ratio $m$ such that: 

\begin{equation}\label{eq1}
m = \frac{AB}{AC}
\vspace{0.1cm}
\end{equation}

Undoubtedly, the most famous section of a segment is the golden section, known since ancient Greeks, and commonly defined by proportion:

\begin{equation}\label{eq3}
\frac{AB}{AC}=\frac{AC}{CB}=\phi
\vspace{0.1cm}
\end{equation}

where $\phi = 1.618033...$ is called golden or Fibonacci number \cite{Livio_a}. Stakhov \cite{Stakhov_a} generalized the golden section in p-golden sections (or p-sections), defined by:

\begin{equation}\label{eq7}
  \begin{aligned}
    &\left(\frac{AB}{AC}\right)^p =\frac{AC}{CB}, &where \hspace{0.2cm}&
    \frac{AB}{AC} =\alpha_{p}, \forall \mspace{3mu} p \in \mathbb{N}&
   \end{aligned}
\vspace{0.1cm}
\end{equation}

\begin{wraptable}{r}{50mm}
  \begin{center}
  \footnotesize
  \renewcommand{\arraystretch}{1.0}
  \vspace{-0.25cm}
  \begin{tabular}{|c|c|}
  \hline
  p & $\alpha_{p}$ \\ 
  \hline
  0 & 2.0000000000... \\ 
  \hline
  1 & 1.6180339887... \\ 
  \hline
  2 & 1.4655712318... \\ 
  \hline
  3 & 1.3802775690...\\
  \hline
  4 & 1.3247179572...\\
  \hline
  5 & 1.2851990332...\\
  \hline
  ... & ... \\ 
  \hline
  $\infty$ & 1.0000000000...\\
  \hline
  \end{tabular}
  \vspace{0.2cm}
  \caption{p-Fibonacci numbers}
  \label{tab1}
  \end{center}
  \vspace{-20pt}
\end{wraptable}

The $\alpha_{p}$ are p-Fibonacci numbers (see table \ref{tab1}). Note that for $p=1$ we have the classical golden proportion. Exploring the limits of p-Finobacci equations, we can see that the values of $p$ from $0$ to $\infty$ gives $\alpha_{p}$ in the range $\left(2,1\right[$. This is the same interval when $AC$ tends to midpoint $AM$ ($m\to2$), and $AC$ tends to $AB$ ($m\to1$). Others attempts to generalize the golden proportion can be found in \cite{Fowler_a,Falbo_a}.

Logarithmic spirals have the property that the distance between turns increases in geometric progression. Golden (logarithmic) spirals have $\phi$ as their growth factor. It is possible to approximate golden spirals with whirling squares (whose sides are in successive $\phi$ ratios), through quarter-circle arcs inscribed into squares to simulate the turns (figure \ref{Fig4a}, for $p=1$). Such squares can tile a rectangle also called golden, because its sides will be in golden ratio \cite{Livio_a}.  

Several interesting properties involving the golden rectangle and the approximate golden spiral have been described
\cite{Brousseau_a,Holden_a,Hoggartt_a,Pickover_a}. We show here that many of these properties can be extended for any whirling square and its correlated pseudo-logarithmic spiral built with any $m$ ratio, what henceforth we call m-spiral (figure \ref{Fig4a}). We are particularly interested in the location of poles in such m-spirals. 

\begin{figure}[h]
\centering
\includegraphics[trim = 0mm 18mm 0mm 0mm, clip, width=0.8\textwidth]{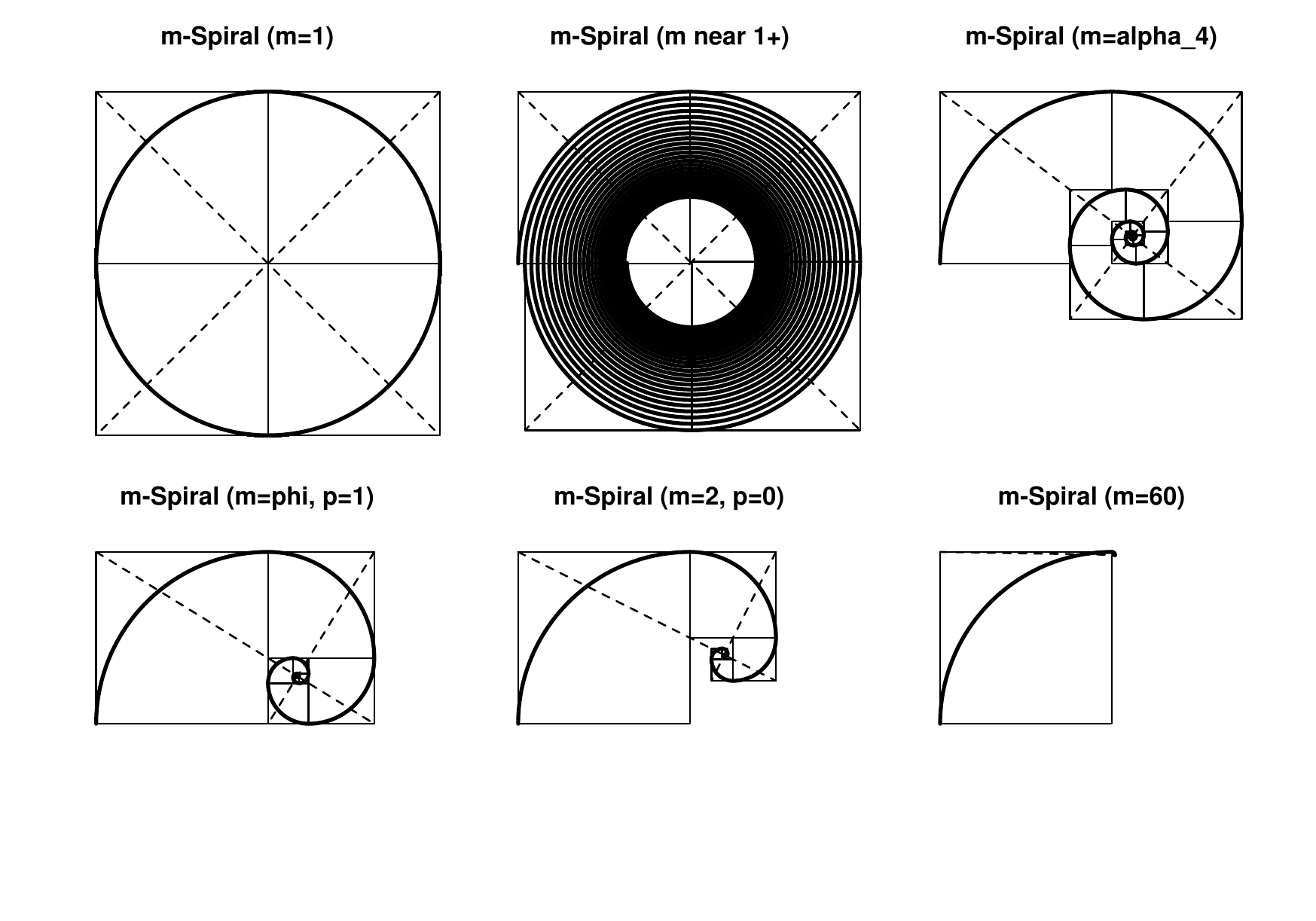}
\caption{Some m-spirals. Dashed lines are diagonals}
\label{Fig4a}
\end{figure}
\vspace{0.0cm}

\section{Notes on poles}

\begin{definitions}\label{definB}
Let m-spirals be a generalized whirling square spiral whose square sides $L_i$ are in successive $m$ ratio $(m=\frac{L_{i}}{L_{i-1}})$, $\forall m \in \mathbb{R}, m>1$ and $\forall i \in \mathbb{N},i \geq0$. Let $L=L_0$ and $(X_{0},Y_{0})$ be the side length and coordinate center of first square, respectively. Let $(X_{eye},Y_{eye})$ be coordinates of the spiral pole. Let $(X_{ur},Y_{ur})=(X_{0}+\frac{L}{2},Y_{0}+\frac{L}{2})$ and $(X_{lr},Y_{lr})=(X_{0}+\frac{L}{2},Y_{0}-\frac{L}{2})$ be coordinates of upper and lower right vertices of the first square, respectively. 
\end{definitions}

\begin{lemma}\label{lemmaA}
 For any m-spiral, the center of $i^{th}$ square can be found at:
 
\begin{equation}\label{eq08}
  \begin{dcases}
  %\begin{align*}
    &X_{i}=X_{ur}+L\left(k_1(i)\frac{m-1}{m^2+1}-(-1)^ik_2(i)\right)\\[0.5em]
    &Y_{i}=Y_{ur}-L\left(k_1(i)\frac{m+1}{m^2+1}+k_2(i)\right)%\\[0.5em]
  %\end{align*}
  \end{dcases}
\end{equation}

\begin{equation}\label{eq09}
  \begin{aligned}
    &where\\[0.2em]
    &k_1(i)=1-\left(-\frac{1}{m^2}\right)^{\left\lfloor{\frac{i}{2}}\right\rfloor}&
    k_2(i)=(-1)^{\left\lfloor{\frac{i}{2}}\right\rfloor}\frac{1}{2m^i}
  \end{aligned}
\end{equation} 
\vspace{0.4cm}

\end{lemma}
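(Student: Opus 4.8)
The plan is to reduce the claim to a single vector recursion between consecutive square centers and then to sum a geometric series in the complex plane. Write $O_i=(X_i,Y_i)$ for the center of the $i$-th square and let $\Delta_i=O_{i+1}-O_i$ be the center-to-center displacement. Since each square is produced from its predecessor by the whirling construction — rescaling the side by the fixed factor and attaching the new square flush against one edge, with the edge of attachment advancing by a quarter turn at every step — I expect $\Delta_i$ to satisfy
\[
\Delta_{i+1}=\tfrac1m R\,\Delta_i,
\]
where $R$ is the clockwise rotation by $90^\circ$, together with the base case $\Delta_0=\tfrac{L}{2m}\bigl((m+1),\,(m-1)\bigr)$ read off from the first two squares: half-side of square $0$ plus half-side of square $1$ in the horizontal direction, and the difference of those half-sides in the vertical direction, because square $1$ is edge-aligned with square $0$. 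Establishing this base case and this quarter-turn recursion directly from Definitions \ref{definB} is the conceptual core of the argument.

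Next I would identify $\R^2$ with $\C$, writing $Z_i=X_i+\iota Y_i$ with imaginary unit $\iota$. The recursion becomes $\Delta_{i+1}=\rho\,\Delta_i$ with the single complex ratio $\rho=-\iota/m$ (multiplication by $-\iota$ being exactly the clockwise quarter turn), so that $\Delta_j=\rho^{\,j}\Delta_0$ and
\[
Z_i=Z_0+\Delta_0\sum_{j=0}^{i-1}\rho^{\,j}=Z_0+\Delta_0\,\frac{1-\rho^{\,i}}{1-\rho}.
\]
Because $|\rho|=1/m<1$ the series converges, which is also what makes the centers accumulate at a single pole. Using $\tfrac{1}{1-\rho}=\tfrac{m}{m+\iota}=\tfrac{m(m-\iota)}{m^2+1}$ I would set $C=\Delta_0/(1-\rho)$ and split $Z_i=Z_0+C-C\rho^{\,i}$. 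The constant part $Z_0+C$ supplies the limiting offset, whose real and imaginary parts reproduce the terms of \eqref{eq08} obtained by setting $k_1(i)=1$, namely $\tfrac{m-1}{m^2+1}$ and $\tfrac{m+1}{m^2+1}$ after subtracting $Z_{ur}=Z_0+\tfrac{L}{2}(1+\iota)$; the remainder $-C\rho^{\,i}$ supplies the decaying, sign-oscillating correction carried by $k_2$ and by the $(-1/m^2)^{\lfloor i/2\rfloor}$ factor inside $k_1$.

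The final step is to match this closed form to \eqref{eq08}--\eqref{eq09} by expanding $\rho^{\,i}=(-\iota)^{\,i}m^{-i}$ and taking real and imaginary parts. Here $(-\iota)^{\,i}$ is $4$-periodic, cycling through $1,-\iota,-1,\iota$, so a split of $i$ modulo $4$ (equivalently an even/odd split bundling the two real and the two imaginary phases) is what forces the appearance of $\bigl\lfloor i/2\bigr\rfloor$: the magnitude $m^{-i}$ pairs with the period-two real/imaginary alternation to produce $(-1/m^2)^{\lfloor i/2\rfloor}$ in the $k_1$ term and $(-1)^{\lfloor i/2\rfloor}m^{-i}$ in the $k_2$ term, while the surviving $(-1)^i$ prefactor in $X_i$ records the remaining phase. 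The denominators $m^2+1$ are precisely $|m+\iota|^2$ coming from $1/(1-\rho)$.

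I expect the main obstacle to be bookkeeping rather than ideas: first, proving the quarter-turn recursion cleanly for every $i$, so that the alignment edge really advances consistently and the sense of the turn never flips; and second, carrying the modulo-$4$ case analysis through both coordinates without sign errors, so that the compact $\lfloor i/2\rfloor$ and $(-1)^i$ packaging of \eqref{eq09} emerges. An alternative to the complex geometric sum is a direct induction on $i$ from $Z_{i+1}=Z_i+\Delta_i$ together with $\Delta_i=\rho^{\,i}\Delta_0$; this avoids manipulating the series but still requires the same even/odd reconciliation at the inductive step.
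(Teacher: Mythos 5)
Your proposal is correct, and it reaches the lemma by a genuinely different route from the paper. The paper works coordinate\nobreakdash-wise over the reals: it reads off from its pole-searching algorithm the explicit partial sums \eqref{eq10} for $X_i$ and $Y_i$, splits each into two interleaved geometric series of ratio $-\tfrac{1}{m^2}$ (first terms $\tfrac1m$ and $\pm\tfrac{1}{m^2}$), repairs an even/odd asymmetry in the number of terms by inserting a compensating $\pm\tfrac{1}{m^i}$, sums via $S(n)=(1-r^n)\tfrac{a_1}{1-r}$, and finally fuses the even and odd cases with the floor operator. You instead package the whole construction into the single displacement recursion $\Delta_{i+1}=\tfrac1m R\,\Delta_i$, i.e.\ $\Delta_{i+1}=\rho\,\Delta_i$ with $\rho=-\iota/m$ after complexification, and sum one geometric series. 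Your base case $\Delta_0=\tfrac{L}{2m}\,(m+1,\,m-1)$ and the clockwise quarter-turn recursion are exactly what the paper's automaton states $1$ through $4$ encode (e.g.\ state $1$ gives the displacement $(\tfrac{L}{2}+\tfrac{L}{2m},\,\tfrac{L}{2}-\tfrac{L}{2m})$), so the step you flag as the conceptual core rests on the same appeal to the construction that the paper makes through its algorithm; it is not a gap. The two methods are tied together by $\rho^2=-\tfrac{1}{m^2}$: your single complex ratio explains where the paper's real ratio and its parity gymnastics come from, since the period-four cycle of $(-\iota)^i$ is precisely what $\lfloor i/2\rfloor$ and the $(-1)^i$ prefactor in \eqref{eq08}--\eqref{eq09} are recording. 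Your closed form $Z_i=Z_0+C-C\rho^{\,i}$ with $C=\Delta_0/(1-\rho)$ also delivers Theorem \ref{theoremB} for free as the constant part $Z_0+C$, whereas the paper takes a separate limit afterwards; and the matching you defer does go through, e.g.\ for even $i$ the real part of $-C\rho^{\,i}$ is $-\tfrac{(-1)^{i/2}}{m^i}\bigl(L\tfrac{m-1}{m^2+1}+\tfrac{L}{2}\bigr)$, which is exactly the sum of the $k_1$-correction and $k_2$ terms of \eqref{eq08}, the odd case being analogous. What the paper's route buys is elementarity: no complex numbers, everything read directly off the algorithm's output series. What yours buys is a one-line convergence statement, a unified treatment of both coordinates and both parities, and a structural explanation of the denominators $m^2+1=|m+\iota|^2$, which in the paper simply fall out of the real geometric-series formula.
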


\begin{proof}
 
Algorithm 1 presents a procedure to generate the m-spirals of Figure 1. It was implemented using a finite automaton with one initial state and four iterative states (see figure \ref{Fig5a}). The convergence to pole is made by recursive searching of square centers in clockwise direction.

It is not difficult to see that function \textbf{SearchPole$()$} produces the series: 

  \begin{equation}\label{eq10}
    \begin{dcases}
    %\begin{align*}
      &X_{i} = X_{0}+\frac{L}{2}+L\left(\frac{1}{m}-\frac{1}{m^2}-\frac{1}{m^3}+\frac{1}{m^4}+\frac{1}{m^5}-\cdots \pm \frac{1}{m^{(i-1)}} \pm \frac{1}{2 m^{(i)}}\right)\\[0.5em]
      &Y_{i} = Y_{0}+\frac{L}{2}-L\left(\frac{1}{m}+\frac{1}{m^2}-\frac{1}{m^3}-\frac{1}{m^4}+\frac{1}{m^5}+\cdots \pm \frac{1}{m^{(i-1)}} \pm \frac{1}{2 m^{(i)}}\right)\\
    %\end{align*}
    \end{dcases}
  \end{equation}
  \vspace{0.4cm}

We can strategically rearrange these series, decomposing them in parts, in order to distinct two geometric series: one with first term $a_{1}=\frac{1}{m}$ and other with $a_{1}=\pm\frac{1}{m^2}$, but both with ratio $r=-\frac{1}{m^2}$.

  \begin{equation*}%\label{eq14}
    \begin{dcases}
    %\begin{align*}
      &X_{i} = X_{ur}+L\left(\boxed{\frac{1}{m}-\frac{1}{m^3}+\frac{1}{m^5}-\cdots}\cdots \boxed{-\frac{1}{m^2}+\frac{1}{m^4}-\frac{1}{m^6}+\cdots}\cdots \boxed{\pm\frac{1}{2m^{(i)}}}\right)\\[1.0em]
      &Y_{i} = Y_{ur}-L\left(\boxed{\frac{1}{m}-\frac{1}{m^3}+\frac{1}{m^5}-\cdots}\cdots \boxed{+\frac{1}{m^2}-\frac{1}{m^4}+\frac{1}{m^6}-\cdots}\cdots \boxed{\pm\frac{1}{2m^{(i)}}} \right)\\
    %\end{align*}
    \end{dcases}
  \end{equation*}
  \vspace{0.4cm} 

We have to deal with an asymmetry when comparing even and odd $i^{th}$ square centers. This is best perceived by examples. See below:

  \begin{equation*}%\label{eq13b}
    \begin{dcases}
    %\begin{align*}
      &X_{5} = X_{0}+\frac{L}{2}+L\left(\frac{1}{m}-\boxed{\frac{1}{m^2}}-\frac{1}{m^3}+\boxed{\frac{1}{m^4}}+\frac{1}{2m^5}\right)\\[1.0em]
      &Y_{5} = Y_{0}+\frac{L}{2}-L\left(\frac{1}{m}+\boxed{\frac{1}{m^2}}-\frac{1}{m^3}-\boxed{\frac{1}{m^4}}+\frac{1}{2m^5}\right)\\
    %\end{align*}
    \end{dcases}
  \end{equation*}
  \vspace{0.2cm}

  \begin{equation*}%\label{eq13a}
    \begin{dcases}
    %\begin{align*}
      &X_{6} = X_{0}+\frac{L}{2}+L\left(\frac{1}{m}-\boxed{\frac{1}{m^2}}-\frac{1}{m^3}+\boxed{\frac{1}{m^4}}+\frac{1}{m^5}-\frac{1}{2 m^{6}}\right)\\[1.0em]
      &Y_{6} = Y_{0}+\frac{L}{2}-L\left(\frac{1}{m}+\boxed{\frac{1}{m^2}}-\frac{1}{m^3}-\boxed{\frac{1}{m^4}}+\frac{1}{m^5}+\frac{1}{2m^{6}}\right)\\
    %\end{align*}
    \end{dcases}
  \end{equation*}
  \vspace{0.4cm}

If we ignore the last term $\pm\frac{1}{2m^i}$, in even square centers (like $(X_6,Y_6)$), the geometric series with even exponents have one term less than odd ones. We can facilitate demonstration process if we introduce the correction above, aiming equal the parity of terms:
 
 \begin{equation*}%\label{eq13c}
  \begin{dcases}
  %\begin{align*}
    &X_{6} = X_{0}+\frac{L}{2}+L\left(\frac{1}{m}-\boxed{\frac{1}{m^2}}-\frac{1}{m^3}+\boxed{\frac{1}{m^4}}+\frac{1}{m^5}-\boxed{\frac{1}{m^6}}+\frac{1}{2 m^{6}}\right)\\[0.5em]
    &Y_{6} = Y_{0}+\frac{L}{2}-L\left(\frac{1}{m}+\boxed{\frac{1}{m^2}}-\frac{1}{m^3}-\boxed{\frac{1}{m^4}}+\frac{1}{m^5}+\boxed{\frac{1}{m^6}}-\frac{1}{2m^{6}}\right)\\
  %\end{align*}
  \end{dcases}
\end{equation*}
\vspace{0.4cm}

Be $S_1(n)$ and $S_2(n)$ the geometric series for $(a_1=\frac{1}{m},r=-\frac{1}{m^2})$ and $(a_1=-\frac{1}{m^2},r=-\frac{1}{m^2})$, respectively. So, for even squares, equations \eqref{eq10} can be rewritten as ($\forall j \in \mathbb{N},j \geq0$):

  \begin{equation*}%\label{x1}
    %\footnotesize
    \begin{dcases}
    %\begin{align*}
      &X_{2j} = X_{ur}+L\left(S_1(j)+S_2(j)-\frac{(-1)^j}{2m^{(2j)}}\right)\\[0.5em]
      &Y_{2j} = Y_{ur}-L\left(S_1(j)-S_2(j)+\frac{(-1)^j}{2m^{(2j)}}\right)
    %\end{align*}
    \end{dcases}
  \end{equation*}
  \vspace{0.4cm}

For odd squares:

  \begin{equation*}%\label{x2}
    \begin{dcases}  
    %\begin{align*}
      &X_{2j+1} = X_{ur}+L\left(S_1(j)+S_2(j)+\frac{(-1)^j}{2m^{(2j+1)}}\right)\\[0.5em]
      &Y_{2j+1} = Y_{ur}-L\left(S_1(j)-S_2(j)+\frac{(-1)^j}{2m^{(2j+1)}}\right)
    %\end{align*}
    \end{dcases}
  \end{equation*}
  \vspace{0.4cm}	

Given that $S(n)=(1-r^n)\frac{a_1}{1-r}$ and making $k(n)=1-r^n$, for even squares we have:

  \begin{equation*}%\label{x3}
    \begin{dcases}
    %\begin{align*}
      &X_{2j} = X_{ur}+L\left(k(j)\frac{m-1}{m^2+1}-\frac{(-1)^j}{2m^{(2j)}}\right)\\[0.5em]
      &Y_{2j} = Y_{ur}-L\left(k(j)\frac{m+1}{m^2+1}+\frac{(-1)^j}{2m^{(2j)}}\right)
    %\end{align*}
    \end{dcases}
  \end{equation*}
  \vspace{0.4cm}

For odd squares:

  \begin{equation*}%\label{x4}
    \begin{dcases}
    %\begin{align*}
      &X_{2j+1} = X_{ur}+L\left(k(j)\frac{m-1}{m^2+1}+\frac{(-1)^j}{2m^{(2j+1)}}\right)\\[0.5em]
      &Y_{2j+1} = Y_{ur}-L\left(k(j)\frac{m+1}{m^2+1}+\frac{(-1)^j}{2m^{(2j+1)}}\right)
    %\end{align*}
    \end{dcases}
  \end{equation*}
  \vspace{0.4cm}

We must now unify even and odd square equations into a single expression for a generic $i \geq 0$. We need an operator that when applied over $\frac{2j}{2}$ and $\frac{(2j+1)}{2}$ retrieves $j$. It is simple to verify that \emph{floor} operator $\lfloor x \rfloor$ does it. Therefore, we can conceive the following functions:

\begin{equation*}%\label{x4}
  %\begin{dcases}
  \begin{aligned}
    K_{1}(i) &= 1-\left(-\frac{1}{m^2}\right)^{\left\lfloor{\frac{i}{2}}\right\rfloor} &&
    K_{2}(i) &= \frac{(-1)^{\left\lfloor{\frac{i}{2}}\right\rfloor}}{2m^i}
  \end{aligned}
  %\end{dcases}
\end{equation*}
\vspace{0.4cm}

and state that:

\begin{equation*}%\label{x4}
  \begin{dcases}
  %\begin{align*}
    &X_{i} = X_{ur}+L\left(k_1(i)\frac{m-1}{m^2+1}-(-1)^ik_2(i)\right)\\[0.5em]
    &Y_{i} = Y_{ur}-L\left(k(j)\frac{m+1}{m^2+1}+k_2(i)\right)
  %\end{align*}
  \end{dcases}
\end{equation*}
\vspace{0.4cm}

\end{proof}

\begin{figure}[h]
\centering
\includegraphics[width=0.8\textwidth]{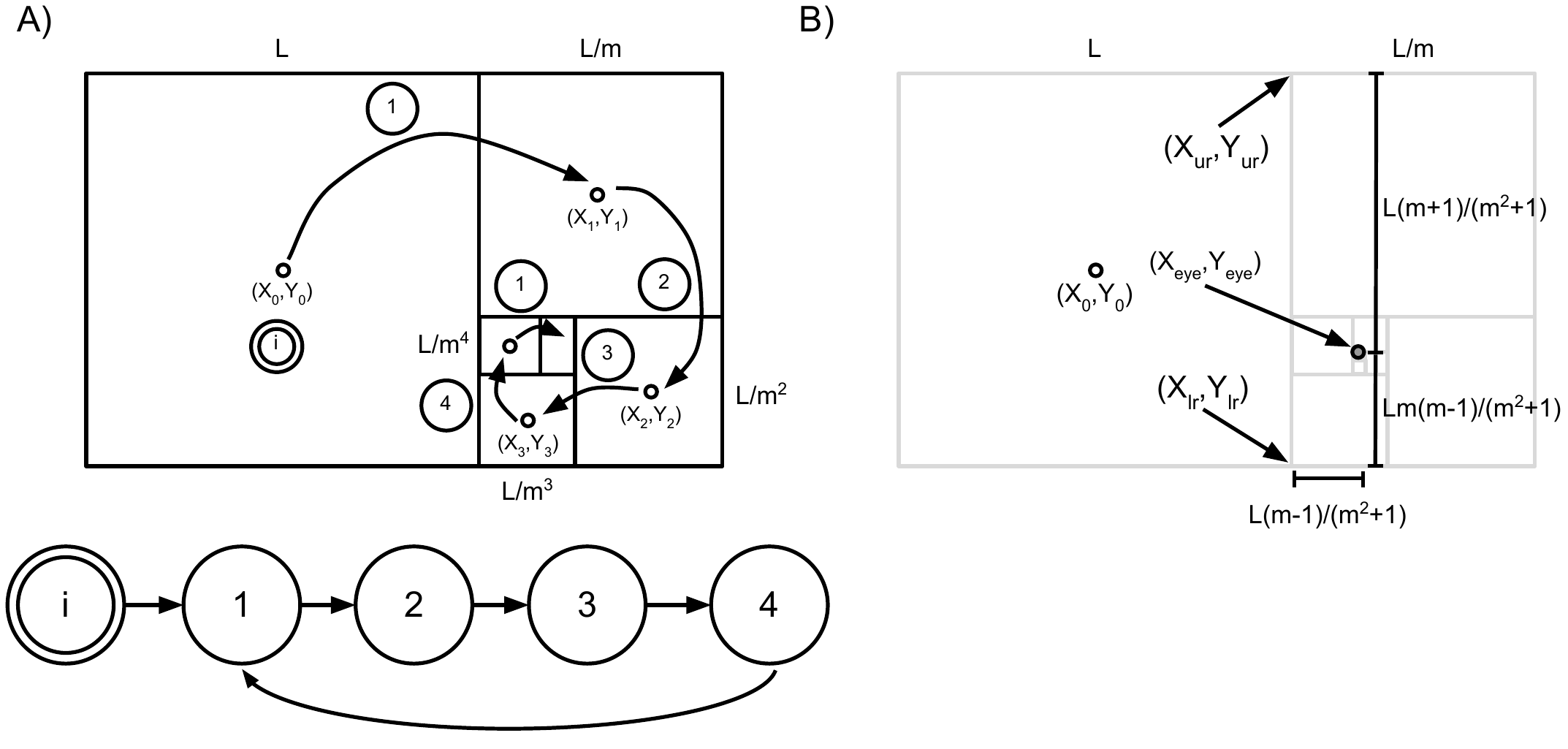}
\caption{A) Finite automaton for m-spiral generation. B) Geometric interpretation of theorem \ref{theoremB}.}
\label{Fig5a}
\end{figure}
\vspace{0.4cm}

\begin{algorithm}[h]
  \caption{Spiral Pole Determination (pseudocode)}
  \scriptsize \begin{algorithmic}[1]
  \label{alg1}
  \Function{SearchPole}{$S,L,m,X_{eye},Y_{eye},i,n$}
  \Comment{$S:State$,$L:side$,$(X_{eye},Y_{eye}):Pole$,$i:counter$,$n:iteractions$}
    \If {$(i>n)$} \textbf{return} $(X_{eye},Y_{eye})$ \EndIf
    \If {$(S==1)$}
      \State {$X_i=X_{eye}+\frac{L}{2}+\frac{L}{2m}$}
      \State {$Y_i=Y_{eye}+\frac{L}{2}-\frac{L}{2m}$}
      \State {SearchPole$(S=2,L=\frac{L}{m},m,X_{eye}=X_i,Y_{eye}=Y_i,i=i+1,n)$}
    \EndIf
    \If {$(S==2)$}
      \State {$X_i=X_{eye}+\frac{L}{2}-\frac{L}{2m}$}
      \State {$Y_i=Y_{eye}-\frac{L}{2}-\frac{L}{2m}$}
      \State {SearchPole$(S=3,L=\frac{L}{m},m,X_{eye}=X_i,Y_{eye}=Y_i,i=i+1,n)$}
    \EndIf
     \If {$(S==3)$}
      \State {$X_i=X_{eye}-\frac{L}{2}-\frac{L}{2m}$}
      \State {$Y_i=Y_{eye}-\frac{L}{2}+\frac{L}{2m}$}
      \State {SearchPole$(S=4,L=\frac{L}{m},m,X_{eye}=X_i,Y_{eye}=Y_i,i=i+1,n)$}
    \EndIf
     \If {$(S==4)$}
      \State {$X_i=X_{eye}-\frac{L}{2}+\frac{L}{2m}$}
      \State {$Y_i=Y_{eye}+\frac{L}{2}+\frac{L}{2m}$}
      \State {SearchPole$(S=1,L=\frac{L}{m},m,X_{eye}=X_i,Y_{eye}=Y_i,i=i+1,n)$}
    \EndIf

  \EndFunction
\end{algorithmic}
\end{algorithm}

\vspace{1cm}
\begin{theorem}\label{theoremB}

For any m-spiral, poles can be found at:

\begin{equation}\label{eq11}
  \begin{dcases}
  %\begin{align*}
    &X_{eye} = X_{ur}+L\frac{m-1}{m^{2}+1} = X_{lr}+L\frac{m-1}{m^{2}+1} \\[0.5em]
    &Y_{eye} = Y_{ur}-L\frac{m+1}{m^{2}+1} = Y_{lr}+L\frac{m(m-1)}{m^{2}+1}\\[0.5em]
  %\end{align*}
  \end{dcases}
\end{equation}
\vspace{0.4cm}

\end{theorem}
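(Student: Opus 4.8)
The plan is to obtain the pole as the limit of the square centers, namely $(X_{eye},Y_{eye})=\lim_{i\to\infty}(X_i,Y_i)$, since the whirling construction spirals inward and converges to its pole. First I would invoke Lemma~\ref{lemmaA} to write $X_i$ and $Y_i$ in closed form through the auxiliary functions $k_1(i)$ and $k_2(i)$, and then let $i\to\infty$ in those two expressions. Everything that follows is a limit computation followed by a short algebraic rewriting, so I do not expect a genuine obstacle, only two points that require care.

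Because $m>1$, the ratio $-1/m^2$ satisfies $\lvert -1/m^2\rvert<1$, so $\left(-1/m^2\right)^{\lfloor i/2\rfloor}\to 0$ and hence $k_1(i)\to 1$; likewise $m^i\to\infty$ forces $k_2(i)=(-1)^{\lfloor i/2\rfloor}/(2m^i)\to 0$, the bounded alternating factor $(-1)^{\lfloor i/2\rfloor}$ not affecting the decay. Substituting these limits into the Lemma's formulas, the products $(-1)^i k_2(i)$ and $k_2(i)$ both vanish while $k_1(i)$ is replaced by $1$, collapsing the center expressions to $X_{eye}=X_{ur}+L\frac{m-1}{m^2+1}$ and $Y_{eye}=Y_{ur}-L\frac{m+1}{m^2+1}$. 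These are precisely the first equalities in \eqref{eq11}.

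It then remains to recast these coordinates in terms of the lower-right vertex. From Definitions~\ref{definB} one reads off $X_{lr}=X_{ur}$ and $Y_{ur}=Y_{lr}+L$, so the $X$ identity is immediate. For the $Y$ identity I would substitute $Y_{ur}=Y_{lr}+L$ and simplify the resulting coefficient, $L-L\frac{m+1}{m^2+1}=L\frac{m^2-m}{m^2+1}=L\frac{m(m-1)}{m^2+1}$, which yields $Y_{eye}=Y_{lr}+L\frac{m(m-1)}{m^2+1}$ as claimed. The two points to watch are confirming that the geometric terms vanish for \emph{every} $m>1$ (so that the construction genuinely converges, guaranteeing the pole is well defined as this limit) and verifying the single fraction simplification in the $Y_{lr}$ form; neither is difficult, but both are essential for the chain of equalities in \eqref{eq11} to hold exactly.
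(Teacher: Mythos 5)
Your proposal is correct and follows essentially the same route as the paper: it takes the limit $i\to\infty$ of the square-center formulas from Lemma~\ref{lemmaA}, observes $k_1(i)\to 1$ and $k_2(i)\to 0$ for $m>1$, and then converts to the lower-right vertex via $X_{lr}=X_{ur}$, $Y_{ur}=Y_{lr}+L$ and the simplification $L-L\frac{m+1}{m^2+1}=L\frac{m(m-1)}{m^2+1}$. If anything, your version is slightly more careful, since it justifies the convergence of the geometric terms explicitly and replaces the paper's appeal to ``symmetry'' with the direct coordinate relations from Definitions~\ref{definB}.
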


\begin{proof}
The square centers converge to m-spiral pole when $i \to \infty$. So, equations \eqref{eq09} produce the limits
 
\begin{equation*}%\label{eq11}
  \begin{aligned}
    &\lim_{i \to \infty} k_1(i) \to 1 &  &\lim_{i \to \infty} k_2(i) \to 0
  \end{aligned}
\end{equation*}

Which gives

\begin{equation*}%\label{eq17}
  \begin{dcases}
  %\begin{align*}
    X_{eye} &= X_{ur}+L\left(\frac{m-1}{m^2+1}\right)\\[0.5em]    
    Y_{eye} &= Y_{ur}-L\left(\frac{m+1}{m^2+1}\right)\\    
  %\end{align*}
  \end{dcases}
\end{equation*}
\vspace{0.4cm}

Since $L-L\frac{m+1}{m^2+1}=L\frac{m(m-1)}{m^2+1}$, we have by symmetry (see figure \ref{Fig5a}B)

\begin{equation*}%\label{eq17}
  \begin{dcases}
  %\begin{align*}
    X_{eye} &= X_{lr}+L\left(\frac{m-1}{m^2+1}\right)\\[0.5em]    
    Y_{eye} &= Y_{lr}+L\left(\frac{m(m-1)}{m^2+1}\right)\\    
  %\end{align*}
  \end{dcases}
\end{equation*}
\vspace{0.4cm}
 
\end{proof}

\begin{corollary}\label{corolE}
It is immediate that

\begin{equation}
 \frac{Y_{eye}-Y_{lr}}{X_{eye}-X_{lr}}=m
\end{equation}
\end{corollary}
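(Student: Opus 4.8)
The plan is to read the two relevant coordinate differences directly off Theorem \ref{theoremB} and simplify their quotient; no limit, recursion, or series manipulation is needed here, since all of that work is already absorbed into the closed-form pole coordinates. First I would isolate the lower-right representation of the pole supplied by \eqref{eq11}, namely $X_{eye} = X_{lr}+L\frac{m-1}{m^{2}+1}$ and $Y_{eye} = Y_{lr}+L\frac{m(m-1)}{m^{2}+1}$. Subtracting $X_{lr}$ and $Y_{lr}$ respectively then yields $X_{eye}-X_{lr} = L\frac{m-1}{m^{2}+1}$ and $Y_{eye}-Y_{lr} = L\frac{m(m-1)}{m^{2}+1}$.

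Next I would form the ratio and cancel the common factor $L\frac{m-1}{m^{2}+1}$ appearing in numerator and denominator. This cancellation is legitimate precisely because the hypothesis $m>1$ forces $m-1\neq 0$ and $m^{2}+1>0$, so $X_{eye}-X_{lr}\neq 0$ and the quotient is well defined. The surviving factor is exactly $m$, which establishes the claimed identity.

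The only point requiring any care is the choice of representation for the pole: Theorem \ref{theoremB} states $Y_{eye}$ in two equivalent forms, one measured from $Y_{ur}$ and one from $Y_{lr}$, and the clean single-factor cancellation occurs only with the lower-right form (the $Y_{ur}$ form would leave the asymmetric numerator $m+1$ in play). Consequently there is no genuine obstacle; the corollary is a one-line consequence of the theorem, and it records the geometric content of Figure \ref{Fig5a}B, namely that the segment joining the lower-right vertex of the first square to the pole has slope exactly $m$.
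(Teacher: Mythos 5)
Your proposal is correct and matches the paper's intended argument: the paper offers no explicit proof (it labels the result ``immediate''), and the justification it relies on is exactly your one-line computation, reading $X_{eye}-X_{lr}=L\frac{m-1}{m^{2}+1}$ and $Y_{eye}-Y_{lr}=L\frac{m(m-1)}{m^{2}+1}$ off the lower-right form in Theorem \ref{theoremB} and cancelling the common factor. Your added remark that $m>1$ guarantees $m-1\neq 0$, so the quotient is well defined, is a small but worthwhile precision the paper leaves implicit.
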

\vspace{0.4cm}

\begin{corollary}\label{corolF}
Some behaviors at the limit of m: if $m\to 1+$ then $(X_{eye}\to X_{lr},Y_{eye}\to Y_{lr})$; if $m\to \infty$ then $(X_{eye}\to X_{lr}=X_{ur},Y_{eye}\to (Y_{lr}+L)\to Y_{ur})$
\end{corollary}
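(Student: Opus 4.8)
The plan is to read off the closed-form pole coordinates from Theorem \ref{theoremB} and simply evaluate the two stated limits, since Corollary \ref{corolF} is entirely a statement about the asymptotics of the rational displacements of the pole from the lower-right vertex of the first square. First I would record the displacements in the form
\[
X_{eye}-X_{lr}=L\frac{m-1}{m^{2}+1},\qquad Y_{eye}-Y_{lr}=L\frac{m(m-1)}{m^{2}+1},
\]
and then note from Definitions \ref{definB} the two vertex identities $X_{ur}=X_{lr}=X_{0}+\frac{L}{2}$ and $Y_{ur}=Y_{lr}+L$. These identities are what allow each limiting value to be reported against either the upper-right or the lower-right vertex, exactly as the corollary phrases it.

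For $m\to 1^{+}$ I would observe that both numerators $m-1$ and $m(m-1)$ vanish while the denominator $m^{2}+1$ tends to $2$, so both displacements tend to $0$; hence $X_{eye}\to X_{lr}$ and $Y_{eye}\to Y_{lr}$, and the pole collapses onto the lower-right vertex as the whirling squares degenerate toward equal size. For $m\to\infty$ I would argue by comparing degrees: the horizontal displacement $L\frac{m-1}{m^{2}+1}$ has numerator of degree one against denominator of degree two, so it tends to $0$, giving $X_{eye}\to X_{lr}=X_{ur}$; the vertical displacement $L\frac{m(m-1)}{m^{2}+1}$ has matching degree two in numerator and denominator with leading-coefficient ratio $1$, so it tends to $L$, giving $Y_{eye}\to Y_{lr}+L=Y_{ur}$.

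Since every assertion reduces to the limit of a single rational function whose value is fixed by degree comparison, there is no genuine obstacle; the only care required is the bookkeeping of the vertex identities $X_{ur}=X_{lr}$ and $Y_{ur}=Y_{lr}+L$, so that the two limiting positions are named against the correct reference vertex as written in the statement.
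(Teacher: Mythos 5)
Your proposal is correct and follows exactly the route the paper intends: the paper states Corollary \ref{corolF} without proof as an immediate consequence of Theorem \ref{theoremB}, and your argument simply fills in the elementary limit computations of the two rational displacements $L\frac{m-1}{m^{2}+1}$ and $L\frac{m(m-1)}{m^{2}+1}$, together with the vertex identities $X_{ur}=X_{lr}$ and $Y_{ur}=Y_{lr}+L$ from Definitions \ref{definB}. Nothing is missing; your write-up is a complete justification of what the paper leaves implicit.
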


\begin{figure}[h]
\centering
\includegraphics[width=1.0\textwidth]{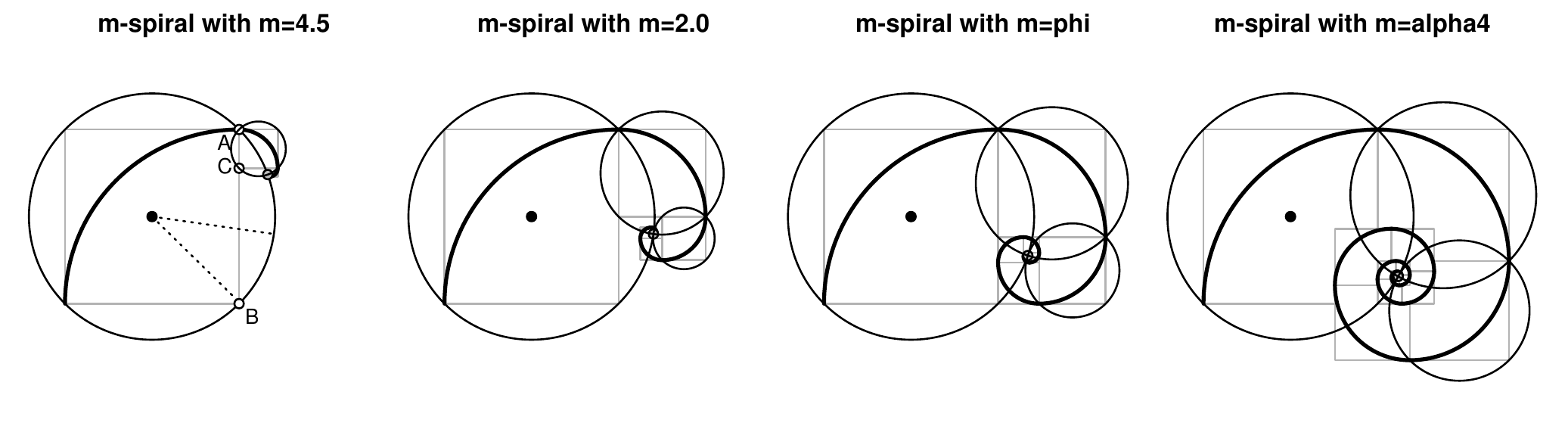}
\caption{Geometric examples of theorem \ref{theoremF}. Dotted lines represent the interval $2 \leq m < 1$}
\label{Fig8a}
\end{figure}
\vspace{0.4cm}

\begin{theorem}\label{theoremF}
  For any m-spiral, circumscribed circles around squares intercept the respective pole; i.e., for each $i^{th}$ square:
 
 \begin{equation}\label{eq34}
  (X_{eye}-X_{i})^2+(Y_{eye}-Y_{i})^2=\frac{L^2}{2m^{2i}}
 \end{equation}
 \vspace{0.4cm}
\end{theorem}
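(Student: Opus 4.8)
The plan is to read the right-hand side of \eqref{eq34} geometrically and then confirm the identity by direct substitution. Since the $i^{th}$ square has side length $L/m^{i}$, its circumscribed circle has radius equal to half the diagonal, namely $\tfrac{1}{\sqrt{2}}\,\tfrac{L}{m^{i}}$, whose square is exactly $\tfrac{L^{2}}{2m^{2i}}$. Hence the claim is equivalent to the statement that the distance from the pole to the center of the $i^{th}$ square equals this circumradius, i.e. the pole lies on every circumscribed circle. To establish it I would compute the left-hand side using the center coordinates of Lemma \ref{lemmaA} together with the pole coordinates of Theorem \ref{theoremB}.

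First I would form the two coordinate differences. Subtracting \eqref{eq08} from \eqref{eq11}, the $X_{ur}$ and $Y_{ur}$ terms cancel and one obtains
\begin{equation*}
  \begin{aligned}
    X_{eye}-X_i &= L\frac{m-1}{m^2+1}\bigl(1-k_1(i)\bigr)+L(-1)^i k_2(i),\\
    Y_{eye}-Y_i &= -L\frac{m+1}{m^2+1}\bigl(1-k_1(i)\bigr)+L\,k_2(i).
  \end{aligned}
\end{equation*}
The decisive simplification is that, writing $q=\left\lfloor i/2\right\rfloor$, one has $1-k_1(i)=\left(-\tfrac{1}{m^2}\right)^{q}=\tfrac{(-1)^q}{m^{2q}}$ while $k_2(i)=\tfrac{(-1)^q}{2m^{i}}$; thus a common factor $(-1)^q$ can be pulled out of both differences and vanishes upon squaring.

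Next I would split into the two parities, since only the factor $(-1)^i$ in front of $k_2$ distinguishes them. For even $i=2q$ each difference carries a common factor $\tfrac{1}{m^{2q}}$ and the bracketed scalars reduce to $\tfrac{m^2+2m-1}{2(m^2+1)}$ and $\tfrac{m^2-2m-1}{2(m^2+1)}$; for odd $i=2q+1$ the corresponding scalars are $\tfrac{m^2-2m-1}{2m(m^2+1)}$ and $-\tfrac{m^2+2m-1}{2m(m^2+1)}$. In either branch the sum of the two squared numerators is governed by the single algebraic identity
\begin{equation*}
  (m^2+2m-1)^2+(m^2-2m-1)^2 = 2(m^2+1)^2,
\end{equation*}
which cancels the factor $(m^2+1)^2$ in the denominator and leaves precisely $\tfrac{L^2}{2m^{4q}}$ for even $i$ and $\tfrac{L^2}{2m^{4q+2}}$ for odd $i$ — that is, $\tfrac{L^2}{2m^{2i}}$ in both cases.

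The computation is entirely routine; the only place demanding genuine care is the bookkeeping of the three interacting signs — the overall $(-1)^q$, the parity factor $(-1)^i$ multiplying $k_2$, and the intrinsic sign of $k_2$ — so the main (and modest) obstacle is organizing the even/odd split cleanly enough that the same identity $(m^2+2m-1)^2+(m^2-2m-1)^2=2(m^2+1)^2$ resolves both branches simultaneously. Once that identity is invoked, the conclusion \eqref{eq34} is immediate.
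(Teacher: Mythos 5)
Your proposal is correct and follows essentially the same route as the paper's proof: both compute $X_{eye}-X_{i}$ and $Y_{eye}-Y_{i}$ by subtracting the formulas of Lemma \ref{lemmaA} from those of Theorem \ref{theoremB}, split into even and odd $i$, and verify \eqref{eq34} by direct algebra. The only difference is bookkeeping --- you substitute the parity values of $1-k_1(i)$ and $k_2(i)$ at the outset and close both branches with the single identity $(m^2+2m-1)^2+(m^2-2m-1)^2=2(m^2+1)^2$, whereas the paper first expands the squares symbolically in $1-k_1(i)$ and $k_2(i)$ and only then inserts the parity-dependent values; the underlying computation is the same.
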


\begin{proof}
Considering the equations of lemma \ref{lemmaA} and theorem \ref{theoremB} we can rewrite equation \eqref{eq34} as follows:
  
  \begin{flalign*}
    &\left(X_{eye} - X_{i}\right)^2+\left(Y_{eye} - Y_{i}\right)^2=&
  \end{flalign*}
  
  \begin{flalign*}
    &=L^2 \left[\left(\frac{m-1}{m^2+1}-k_1(i)\frac{m-1}{m^2+1}+(-1)^ik_2(i)\right)^2+\left(-\frac{m+1}{m^2+1}+k_1(i)\frac{m-1}{m^2+1}+k_2(i)\right)^2\right]&\\[0.5em]
    &=L^2 \left[\left(\frac{m-1}{m^2+1}(1-k_1(i))+(-1)^ik_2(i)\right)^2+\left(\frac{m+1}{m^2+1}(1-k_1(i))-k_2(i)\right)^2\right]&
  \end{flalign*}
  \vspace{0.4cm}

 Simplifying with $k_1^{'}(i)=1-k_1(i)$ and squaring:
  
 \footnotesize
  \begin{flalign*}
    &=L^2 \left[\frac{(m-1)^2}{(m^2+1)^2}k_1^{'}(i)^2+2(-1)^{i}k_1^{'}(i)k_2(i)\frac{m-1}{m^2+1}+k_2(i)^2+\frac{(m+1)^2}{(m^2+1)^2}k_1^{'}(i)^2-2k_1^{'}(i)k_2(i)\frac{m+1}{m^2+1}+k_2(i)^2\right]&
  \end{flalign*}
  \vspace{0.4cm}
  
  \normalsize
  \begin{flalign*}
    &=2L^2 \left[\frac{k_1^{'}(i)^2+k_1^{'}(i)k_2(i)((-1)^{i}(m-1)-(m+1))+k_2(i)^2(m^2+1)}{m^2+1}\right]&
  \end{flalign*}
  \vspace{0.4cm}
  
  We now need to consider the parity. So, for even i, $(i=2j)$:
  
  \begin{flalign*}
    &=2L^2 \left[\frac{\frac{1}{m^{4j}}+\frac{(-1)^{j}(-1)^{j}}{2m^{4j}}(-2)+\frac{1}{4m^{4j}}(m^2+1)}{m^2+1}\right]=\frac{L^2}{2m^{4j}}=\frac{L^2}{2m^{2i}}&
  \end{flalign*}
  \vspace{0.4cm}
  
  for odd i, $(i=2j+1)$:
  
  \begin{flalign*}
    &=2L^2 \left[\frac{\frac{1}{m^{4j}}+\frac{(-1)^{j}(-1)^{j}}{2m^{4j+1}}(-2m)+\frac{1}{4m^{4j+2}}(m^2+1)}{m^2+1}\right]=\frac{L^2}{2m^{4j+2}}=\frac{L^2}{2m^{2i}}&
  \end{flalign*}
  \vspace{0.4cm}

 \end{proof}

\begin{corollary}\label{corolG}
If m-spirals share the same first square of side $L$ than all poles lie on circle circumscribed around it, i.e.: $(X_{eye}-X_{0})^2+(Y_{eye}-Y_{0})^2=\frac{L^2}{2}$. (Note the independence of $m$).
\end{corollary}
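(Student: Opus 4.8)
The plan is to read off this statement as the innermost ($i=0$) instance of Theorem \ref{theoremF}, and then to exploit the fact that the radius appearing there loses its dependence on $m$ precisely at that index. So the whole argument is a specialization followed by a geometric identification of the right-hand side, together with one observation about parameter independence.

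First I would check that the $i=0$ term of Lemma \ref{lemmaA} returns the center of the first square, since equation \eqref{eq34} is phrased in terms of the square centers $(X_i,Y_i)$ rather than $(X_0,Y_0)$ directly. Substituting $i=0$ into equation \eqref{eq09} gives $k_1(0)=1-(-1/m^2)^0=0$ and $k_2(0)=(-1)^0/(2m^0)=1/2$, so equation \eqref{eq08} collapses to $X_0=X_{ur}-L/2$ and $Y_0=Y_{ur}-L/2$; since $(X_{ur},Y_{ur})=(X_0+L/2,\,Y_0+L/2)$ by Definitions \ref{definB}, this recovers exactly the center $(X_0,Y_0)$, as required. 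Setting $i=0$ in equation \eqref{eq34} then yields
\begin{equation*}
(X_{eye}-X_0)^2+(Y_{eye}-Y_0)^2=\frac{L^2}{2m^{0}}=\frac{L^2}{2}.
\end{equation*}
It remains to recognize the right-hand side as the squared circumradius of the first square: a square of side $L$ has diagonal $L\sqrt{2}$, so its circumscribed circle has radius $r=L\sqrt{2}/2$ and hence $r^2=L^2/2$. Thus the displayed identity says exactly that each pole $(X_{eye},Y_{eye})$ lies on the circle circumscribed about the first square.

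Finally I would stress the independence of $m$, which is the actual content of the corollary beyond Theorem \ref{theoremF}. The right-hand side $L^2/2$ carries no trace of the ratio $m$ (it is the value of $L^2/(2m^{2i})$ at $i=0$, where $m^{2i}=1$), while the center $(X_0,Y_0)$ and the side $L$ are, by hypothesis, common to all the spirals under consideration. Consequently the poles of every such m-spiral satisfy one and the same circle equation, and so they all lie on this single fixed circle. I do not expect any genuine analytic obstacle here; the only point deserving care is the indexing bookkeeping that identifies the $i=0$ square center with $(X_0,Y_0)$, since this is precisely what licenses quoting Theorem \ref{theoremF} verbatim at $i=0$.
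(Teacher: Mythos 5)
Your proof is correct and takes the same route as the paper: Corollary \ref{corolG} is simply Theorem \ref{theoremF} specialized to $i=0$, where the right-hand side $\frac{L^2}{2m^{2i}}$ degenerates to the $m$-independent squared circumradius $\frac{L^2}{2}$ of the first square. The paper presents this as immediate and writes no separate proof; your explicit check that the $i=0$ instance of Lemma \ref{lemmaA} recovers the center $(X_{0},Y_{0})$ is just a careful spelling-out of the bookkeeping the paper leaves tacit.
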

\vspace{0.4cm}

\section{Notes on diagonals}

To continue from here, it is more convenient to put equations of theorem \ref{theoremB} in a vector form. Without loss of generality, we will set the vertex $E=(X_{lr},Y_{lr})$ as the origin of our standard basis vector. 

\begin{definitions}\label{definC}
 Keep in mind figure \ref{Fig7a}: let ${A,B,C,D,E,}$ be vertices; let $d_1$ and $d_2$ be diagonals $\overline{AC}$ and $\overline{BD}$, respectively; let $P=(X_{eye},Y_{eye})$ be the pole; let $\mathbf{v_{eye}}$ and $\mathbf{u_{eye}}$ be vectors, such that $(\mathbf{v_{eye}} \bot \mathbf{u_{eye}})$.
\end{definitions}

\begin{figure}[h]
\centering
\includegraphics[width=0.8\textwidth]{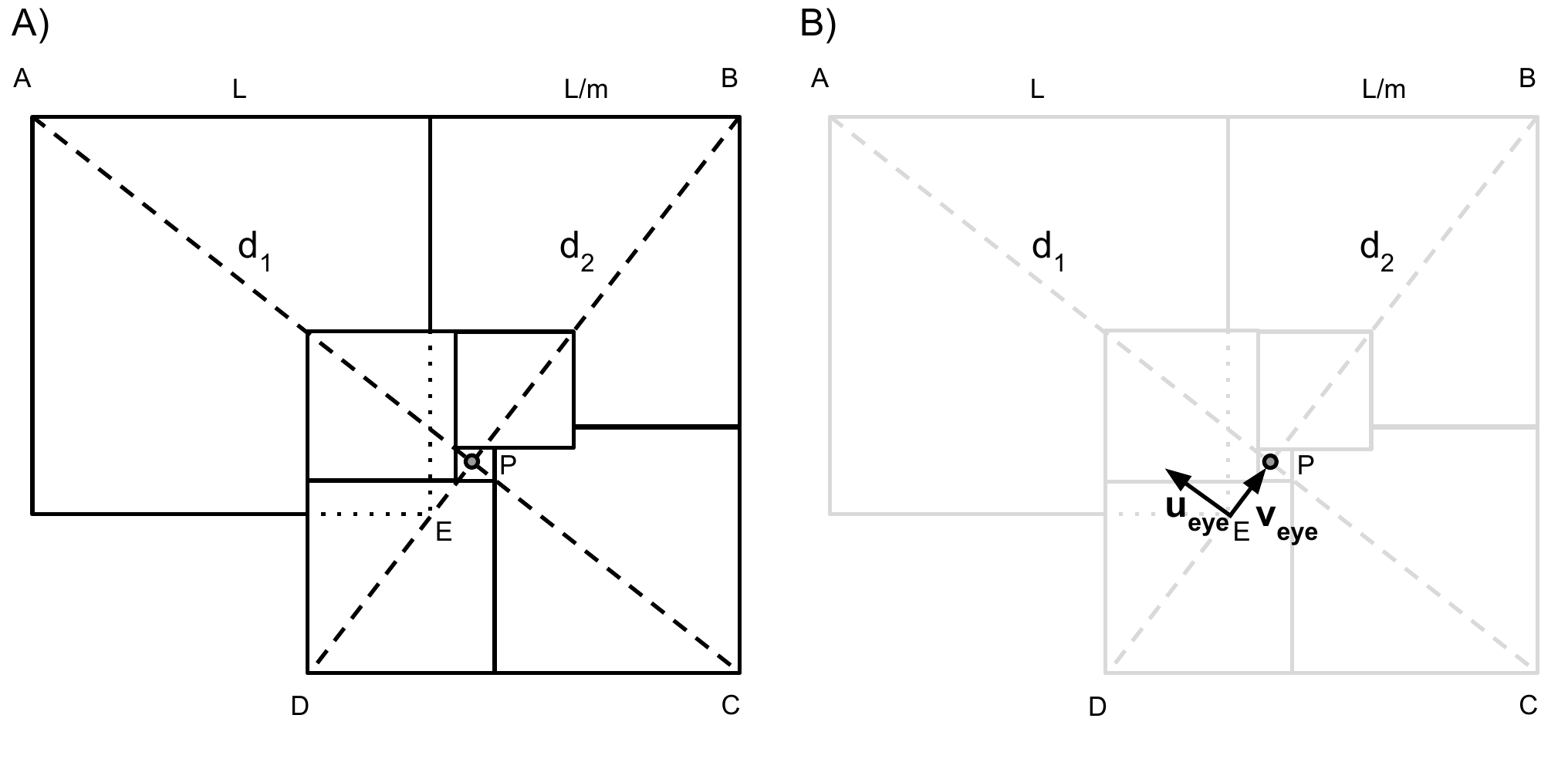}
\caption{Partial representation of a m-spiral. A) Diagonals of extreme square vertices ${A,B,C,D}$. B) Geometric visualization of vectors $\mathbf{v_{eye}}$ and $\mathbf{u_{eye}}$}
\label{Fig7a}
\end{figure}
\vspace{0.4cm}

\begin{theorem}\label{theoremC}
  For any m-spiral, diagonals $d_1$ and $d_2$ are orthogonals, i.e. $(d1\perp d2)$. 
\end{theorem}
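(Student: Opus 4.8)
The plan is to adopt the vector form announced just before Definition~\ref{definC}, placing the origin at $E=(X_{lr},Y_{lr})$. With this choice the two lines of Theorem~\ref{theoremB} collapse to
\begin{equation*}
\overrightarrow{EP}=L\,\frac{m-1}{m^{2}+1}\,(1,m),
\end{equation*}
so the pole sits on the ray of slope $m$ through $E$ (this is Corollary~\ref{corolE}), and the only two directions in play are $(1,m)$ and its orthogonal companion $(m,-1)$. I would set $\mathbf{v_{eye}}:=\overrightarrow{EP}$ and take $\mathbf{u_{eye}}$ to be the vector $\overrightarrow{PB}$ from $P$ to the upper-left vertex $B$ of the first square; a one-line computation gives $\overrightarrow{PB}=\frac{L(m+1)}{m^{2}+1}(-m,1)$, whence $\mathbf{v_{eye}}\cdot\mathbf{u_{eye}}=0$, which fixes the perpendicular frame of Definition~\ref{definC}.

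The substantive step is to write the four extreme vertices in closed form. Using Lemma~\ref{lemmaA} for the square centers and adding the half-side offsets $\pm L/(2m^{i})$, every relevant corner becomes an explicit point: the endpoints of $d_1$ are the lower-right corner of the first square and the upper-right corner of the second square, while the endpoints of $d_2$ are the upper-left corner of the first square and the lower-left corner of the second square. I would then form $\overrightarrow{AC}$ and $\overrightarrow{BD}$ and simplify; after clearing the common factors each reduces to a scalar multiple of $(1,m)$ and of $(m,-1)$, respectively.

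Orthogonality is then immediate, since $\overrightarrow{AC}\cdot\overrightarrow{BD}\propto(1,m)\cdot(m,-1)=m-m=0$; equivalently $\overrightarrow{AC}\parallel\mathbf{v_{eye}}$ and $\overrightarrow{BD}\parallel\mathbf{u_{eye}}$ with $\mathbf{v_{eye}}\perp\mathbf{u_{eye}}$. It is worth recording the underlying reason, which also checks the computation: both diagonals pass through the pole $P$, and among their four endpoints the lower-right and upper-left corners of the first square form a diameter of its circumscribed circle. Since $P$ lies on that circle by Theorem~\ref{theoremF} (the case $i=0$, i.e.\ Corollary~\ref{corolG}), the angle subtended at $P$ by this diameter is right by Thales, and that right angle is precisely $d_1\perp d_2$.

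The final dot product is routine; the real work is the identification of $A,B,C,D$ and the attendant bookkeeping. In particular I expect the main obstacle to be handling the even/odd parity carried by $\lfloor i/2\rfloor$ in $k_1$ and $k_2$ when expanding the corner coordinates, and verifying that the two diagonals are genuinely concurrent at $P$ rather than merely parallel to the orthogonal frame. Once the vertices are correctly located, the reduction to the clean directions $(1,m)$ and $(m,-1)$ is short.
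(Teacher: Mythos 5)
Your primary computation is correct and is essentially the paper's own proof: put the origin at $E=(X_{lr},Y_{lr})$, reduce everything to the directions $(1,m)$ and $(m,-1)$, and check orthogonality — the paper does the same, except that instead of a dot product it rotates $\mathbf{d2}$ by $90^{\circ}$ and exhibits the scalar $t=\frac{1}{m}$ with $\mathbf{d3}=t\,\mathbf{d1}$, which is an equivalent check. The one substantive discrepancy is your identification of the vertices. By equation \eqref{eq21} (and Figure \ref{Fig7a}), $A=(-L,L)$ is the upper-left corner of the \emph{first} square, $B=(\frac{L}{m},L)$ the upper-right corner of the \emph{second}, $C=(\frac{L}{m},\,L-\frac{L}{m}-\frac{L}{m^2})$ the lower-right corner of the \emph{third}, and $D$ the lower-left corner of the \emph{fourth}; so $d_1=\overline{AC}$ and $d_2=\overline{BD}$ join spiral corners spanning four squares, not corners of the first two squares as you assume, and your labels are swapped relative to the paper's. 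You are rescued by a collinearity that you never state: your ``$d_1$'' through $E=(0,0)$ and $(\frac{L}{m},L)=B$ has direction $(1,m)$ and shares the point $B$ with $d_2$, hence is a chord of the line $BD$; your ``$d_2$'' through $(-L,L)=A$ and $(0,\,L-\frac{L}{m})$ has direction $(m,-1)$ and shares the point $A$ with $d_1$, hence is a chord of the line $AC$. The pair of lines, and therefore the orthogonality conclusion, is exactly that of Theorem \ref{theoremC}; but a complete write-up must either include this collinearity check or simply use the paper's $A,B,C,D$, since as outlined you prove perpendicularity of segments of your own choosing.

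Your Thales argument is a genuinely different, more conceptual route: $E$ and $A$ are antipodal on the circumscribed circle of the first square, the pole $P$ lies on that circle by Corollary \ref{corolG}, hence $\angle EPA$ is a right angle; since one diagonal line contains $E$, the other contains $A$, and both pass through $P$, this right angle is the angle between the diagonals. This is elegant, but it consumes the concurrence of $d_1,d_2$ at the pole, which is Theorem \ref{theoremD} — stated and proved only \emph{after} Theorem \ref{theoremC}. There is no circularity (the paper's proof of Theorem \ref{theoremD} nowhere uses Theorem \ref{theoremC}), but this route forces a reordering of the results, a dependency the direct coordinate computation — yours and the paper's — avoids entirely.
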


\begin{proof}

Given equations \eqref{eq11}, we can conceive the vector $\mathbf{v_{eye}}$ as:

\begin{equation}\label{eq19}
  \begin{aligned}
    \mathbf{v_{eye}}&= \left\langle X_{eye}-X_{lr},Y_{eye}-Y_{lr} \right\rangle = \left\langle L\left(\frac{m-1}{m^2+1}\right),L\left(\frac{m(m-1)}{m^2+1}\right)\right\rangle = L 
    \begin{bmatrix}
      \frac{m-1}{m^2+1} \\[0.5em]
      \frac{m(m-1)}{m^2+1}
    \end{bmatrix} \\     
  \end{aligned}
\end{equation}
\vspace{0.4cm}

 We have the following vertex coordinates:

 \begin{equation}\label{eq21}
    \begin{array}[b]{l}
     A = (-L,L)\\[0.5em]
     B = \left(\frac{L}{m},L\right)\\[0.5em]
     C = \left(\frac{L}{m},L-\frac{L}{m}-\frac{L}{m^2}\right)\\[0.5em]
     D = \left(\frac{L}{m}-\frac{L}{m^2}-\frac{L}{m^3},L-\frac{L}{m}-\frac{L}{m^2}\right)
     \end{array}
\end{equation}
\vspace{0.4cm}

and we can define the vectors:

\begin{equation}\label{eq22}
  \begin{aligned}
  \mathbf{d1}=\overrightarrow{AC}=
    \begin{bmatrix}
      -L-\frac{L}{m} \\[0.5em]
      \frac{L}{m}+\frac{L}{m^2}
    \end{bmatrix}&,&
  \mathbf{d2}=\overrightarrow{BD}=
    \begin{bmatrix}
      \frac{L}{m^2}+\frac{L}{m^3} \\[0.5em]
      \frac{L}{m}+\frac{L}{m^2}
    \end{bmatrix}
  \end{aligned}
\end{equation}
\vspace{0.4cm}

If we rotate at 90\textdegree (on page plane) the vector $\mathbf{d2}$, than we can conceive one new vector $\mathbf{d3}$, such that $\mathbf{d3}\perp \mathbf{d2}$. 

 \begin{equation}\label{eq24}
 \mathbf{d3}=
   \begin{bmatrix}
    0 & -1 \\[0.5em]
    1 & 0
  \end{bmatrix}
  \cdot
  \begin{bmatrix}
    \frac{L}{m^2}+\frac{L}{m^3} \\[0.5em]
    \frac{L}{m}+\frac{L}{m^2}
  \end{bmatrix}
  =
  \begin{bmatrix}
    -\frac{L}{m}-\frac{L}{m^2} \\[0.5em]
    \frac{L}{m^2}+\frac{L}{m^3}
  \end{bmatrix}
\end{equation}
\vspace{0.4cm}

Finally, we need to prove that $\mathbf{d3}\parallel \mathbf{d1}$. One of the ways is demonstrate that there is a scalar $t$, such that $\mathbf{d3} = t\mathbf{d1}$. We can see that $t=\frac{1}{m}$ satisfies this condition.

\end{proof}

\begin{theorem}\label{theoremD}
For any m-spiral, diagonals $d_1$ and $d_2$ intercept the pole $(X_{eye},Y_{eye})$.
\end{theorem}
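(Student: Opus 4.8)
The plan is to remain in the vector framework established for Theorem \ref{theoremC}, keeping $E=(X_{lr},Y_{lr})$ as the origin so that the pole $P$ has position vector $\mathbf{v_{eye}}$ from \eqref{eq19} while each vertex in \eqref{eq21} is its own position vector. The governing idea is the parallelism test: a point lies on a line exactly when the vector joining any anchor point of the line to that point is a scalar multiple of the line's direction. Since $A$ is an endpoint of $d_1$ and $B$ an endpoint of $d_2$, it suffices to show that $\overrightarrow{AP}=\mathbf{v_{eye}}-A$ is parallel to $\mathbf{d1}$ and that $\overrightarrow{BP}=\mathbf{v_{eye}}-B$ is parallel to $\mathbf{d2}$; these two facts together place $P$ on both diagonals.

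First I would compute $\overrightarrow{AP}=\mathbf{v_{eye}}-A$ by substituting \eqref{eq19} and \eqref{eq21} and collecting over the common denominator $m^{2}+1$. The components collapse to a single factor $\tfrac{L(m+1)}{m^{2}+1}$ times the direction $\langle m,-1\rangle$, i.e. $\overrightarrow{AP}=\tfrac{L(m+1)}{m^{2}+1}\langle m,-1\rangle$. Factoring $\mathbf{d1}$ from \eqref{eq22} gives $\mathbf{d1}=\tfrac{L(m+1)}{m^{2}}\langle -m,1\rangle$, so the two are proportional with $\overrightarrow{AP}=-\tfrac{m^{2}}{m^{2}+1}\,\mathbf{d1}$. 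Hence $P$ lies on the line through $A$ in direction $\mathbf{d1}$, that is, on diagonal $d_1$.

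Next I would repeat the computation for $\overrightarrow{BP}=\mathbf{v_{eye}}-B$. Clearing the denominator $m(m^{2}+1)$, the terms again combine into one factor $-\tfrac{L(m+1)}{m(m^{2}+1)}$ multiplying the direction $\langle 1,m\rangle$, while $\mathbf{d2}=\tfrac{L(m+1)}{m^{3}}\langle 1,m\rangle$ from \eqref{eq22}; comparison shows $\overrightarrow{BP}=-\tfrac{m^{2}}{m^{2}+1}\,\mathbf{d2}$, the very same scalar as before, so $P$ lies on $d_2$ as well. Two confirmations are worth noting: the directions $\langle m,-1\rangle$ and $\langle 1,m\rangle$ are orthogonal, re-deriving Theorem \ref{theoremC}, and the slope $m$ of $\overrightarrow{EP}$ agrees with Corollary \ref{corolE}. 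In fact the $d_2$ case is even more transparent, since $E$ itself lies on the line $BD$ and $\mathbf{v_{eye}}=\overrightarrow{EP}$ already points along $\langle 1,m\rangle$.

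I expect no real obstacle here, as the whole argument reduces to the parallelism test; the only care required is bookkeeping the common factor $m^{2}+1$ and observing that the entries recorded for $\mathbf{d1},\mathbf{d2}$ in \eqref{eq22} carry the reversed orientation ($\overrightarrow{CA}$ and $\overrightarrow{DB}$), which is immaterial since collinearity depends on direction only up to sign. Having placed $P$ on both diagonals, and since $d_1\not\parallel d_2$ by Theorem \ref{theoremC}, the pole is their unique point of intersection, which is precisely the claim that the diagonals intercept the pole.
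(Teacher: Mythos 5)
Your proposal is correct --- all the computations check out: $\overrightarrow{AP}=\tfrac{L(m+1)}{m^{2}+1}\langle m,-1\rangle=-\tfrac{m^{2}}{m^{2}+1}\,\mathbf{d1}$ and $\overrightarrow{BP}=-\tfrac{L(m+1)}{m(m^{2}+1)}\langle 1,m\rangle=-\tfrac{m^{2}}{m^{2}+1}\,\mathbf{d2}$ --- and you are also right that the vectors recorded in \eqref{eq22} are really $\overrightarrow{CA}$ and $\overrightarrow{DB}$, a sign that is irrelevant for collinearity. Your argument lives in the same vector frame as the paper's (origin at $E$, the data of \eqref{eq19}, \eqref{eq21}, \eqref{eq22}), but the logic runs in the opposite direction, and more economically. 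The paper introduces the auxiliary perpendicular vector $\mathbf{u_{eye}}$ of \eqref{eq25}, builds two lines through the pole, $\mathbf{r_1}=\mathbf{v_{eye}}+k\,\mathbf{u_{eye}}$ and $\mathbf{r_2}=t\,\mathbf{v_{eye}}$, and then identifies them with $d_1$ and $d_2$ by solving four separate incidence equations (a scalar $t$ for each of $A$, $C$, $B$, $D$). You instead anchor each line at a vertex that lies on it by definition and apply the point-on-line (parallelism) test once per diagonal, so two proportionality checks replace four parameter solves and $\mathbf{u_{eye}}$ is never needed; your closing observation that $E$ itself lies on line $BD$ with $\mathbf{v_{eye}}$ along $\langle 1,m\rangle$ essentially recovers the paper's $\mathbf{r_2}$ argument for free. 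Your route also buys a sharper conclusion than incidence alone: the common scalar $-\tfrac{m^{2}}{m^{2}+1}$ means $\overrightarrow{AP}=\tfrac{m^{2}}{m^{2}+1}\overrightarrow{AC}$ and $\overrightarrow{BP}=\tfrac{m^{2}}{m^{2}+1}\overrightarrow{BD}$, so the pole lies strictly inside both segments and divides each diagonal in the ratio $m^{2}:1$. What the paper's organization buys in exchange is explicit parameter values along lines through the pole (and a self-contained identification of those lines with the diagonals, independent of Theorem \ref{theoremC}); your final appeal to Theorem \ref{theoremC} for uniqueness of the intersection point is a harmless extra, not needed for the statement itself.
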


\begin{proof}
 If $(\mathbf{u_{eye}} \bot \mathbf{v_{eye}})$ than 

 \begin{equation}\label{eq25}
  \mathbf{u_{eye}} = L
  \begin{bmatrix}
    \frac{-m(m-1)}{m^2+1} \\[0.5em]
    \frac{m-1}{m^2+1}
  \end{bmatrix}
\end{equation}
 \vspace{0.4cm}
 
We can create the following line equations in vector form (see figure \ref{Fig7a}B):

 \begin{equation}\label{eq26}
   \begin{aligned}
      &\mathbf{r_1}=\mathbf{v_{eye}}+k \mathbf{u_{eye}},&
      &\mathbf{r_2}=t \mathbf{v_{eye}}
   \end{aligned}
 \end{equation}
 \vspace{0.4cm}
 
It is easy to see that $\mathbf{r_1}$ and $\mathbf{r_2}$ will intercept the point $(X_{eye},Y_{eye})$ when $k=0$ and $t=1$, respectively. Now, we have to show that line equations $\mathbf{r_1}$ and $\mathbf{r_2}$ intercept points ${A,C}$ and ${B,D}$, respectively. 

For simplicity, let us make

  \begin{equation}\label{eq27}
    \begin{aligned}
      &a =\left(\frac{m-1}{m^2+1}\right),&&
      &b =\left(1-\frac{1}{m}-\frac{1}{m^2}\right) 
    \end{aligned}
  \end{equation}
  \vspace{0.4cm}

and rewrite vectors \eqref{eq19} and \eqref{eq25} respectively as
  
  \begin{equation}\label{eq28}
  \begin{aligned}
    \mathbf{v_{eye}} = L
    \begin{bmatrix}
      a \\[0.5em] 
      ma 
    \end{bmatrix},&&
    \mathbf{u_{eye}} = L
    \begin{bmatrix}
      -ma \\[0.5em] 
      a
    \end{bmatrix}
  \end{aligned}
  \end{equation}
  \vspace{0.4cm}
  
We have to prove that $\mathbf{r_1}$ intercepts $A$, finding a scalar $t$ such that 

  \begin{equation}\label{eq29}
    \begin{bmatrix}
      -L \\[0.5em] 
      L 
    \end{bmatrix} = L
    \begin{bmatrix}
      a \\[0.5em] 
      ma 
    \end{bmatrix} + tL 
    \begin{bmatrix}
      -ma \\[0.5em] 
      a 
    \end{bmatrix}
  \end{equation}
  \vspace{0.4cm}

We see that $t=\frac{m+1}{m-1}$ satisfies \eqref{eq29}.

Now, we must prove that $\mathbf{r_1}$ intercepts $C$, also finding a scalar $t$ such that

  \begin{equation}\label{eq30}
    \begin{bmatrix}
      \frac{L}{m} \\[0.5em] 
      Lb 
    \end{bmatrix} = L
    \begin{bmatrix}
      a \\[0.5em] 
      ma 
    \end{bmatrix} + tL 
    \begin{bmatrix}
      -ma \\[0.5em] 
      a 
    \end{bmatrix}
  \end{equation}
  \vspace{0.4cm}

We see that $t=\frac{b-1}{m-1}$ satisfies \eqref{eq30}. So, we have proved that $\mathbf{r_1}$ coincides with diagonal $d_1$ and pass through pole $(X_{eye},Y_{eye})$.

The same we have to do with $\mathbf{r_2}$: that it intercepts $B$, finding a scalar $t$ such that 

  \begin{equation}\label{eq31}
    \begin{bmatrix}
      \frac{L}{m} \\[0.5em] 
      L 
    \end{bmatrix} = tL
    \begin{bmatrix}
      a \\[0.5em] 
      ma 
    \end{bmatrix}
  \end{equation}
  \vspace{0.4cm}

We see that $t=\frac{1}{ma}$ satisfies \eqref{eq31}.

And we must prove that $\mathbf{r_2}$ intercepts $D$, also finding a scalar $t$ such that

  \begin{equation}\label{eq32}
    \begin{bmatrix}
      \frac{L}{m}b \\[0.5em] 
      b 
    \end{bmatrix} = tL
    \begin{bmatrix}
      a \\[0.5em] 
      ma 
    \end{bmatrix}
  \end{equation}
  \vspace{0.4cm}

We see that $t=\frac{b}{ma}$ satisfies \eqref{eq32}. So, we have proved that $\mathbf{r_2}$ coincides with diagonal $d_2$ and pass through pole $(X_{eye},Y_{eye})$.
  
 \end{proof}

\begin{corollary}\label{corolH}
 From any m-spiral, diagonals $d_1$ and $d_2$ have inclinations $-\frac{1}{m}$ and $m$, respectively. (directly from corollary \ref{corolE} and from $d_1 \bot d_2$).
\end{corollary}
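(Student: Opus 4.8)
The plan is to read off both inclinations from results already established, exactly as the parenthetical remark suggests. The key observation is that Theorem \ref{theoremD} identifies diagonal $d_2$ with the line $\mathbf{r_2}=t\,\mathbf{v_{eye}}$, so $d_2$ is parallel to the pole vector $\mathbf{v_{eye}}=\langle X_{eye}-X_{lr},\,Y_{eye}-Y_{lr}\rangle$. Since parallel segments share the same inclination, the slope of $d_2$ equals the slope of $\mathbf{v_{eye}}$.

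First I would invoke Corollary \ref{corolE}, which states precisely that $\frac{Y_{eye}-Y_{lr}}{X_{eye}-X_{lr}}=m$; this is the slope of $\mathbf{v_{eye}}$, hence the inclination of $d_2$ is $m$. Second, I would invoke Theorem \ref{theoremC}, which gives $d_1 \perp d_2$. For two lines in the plane, orthogonality forces the product of the slopes to equal $-1$ (provided neither line is vertical, which is guaranteed here since $m>1$ keeps the slope of $d_2$ finite and nonzero). Therefore the inclination of $d_1$ is the negative reciprocal of $m$, namely $-\frac{1}{m}$.

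There is essentially no obstacle here: the statement is a bookkeeping consequence of corollaries and theorems already in hand, and the only point requiring care is matching each diagonal to the correct line ($d_2$ to $\mathbf{r_2}$, $d_1$ to $\mathbf{r_1}$) so that the roles of $m$ and $-\frac{1}{m}$ are not interchanged. As an independent check one may compute the slopes directly from the components in \eqref{eq22}: the ratio for $\overrightarrow{BD}$ simplifies to $\frac{1/m}{1/m^2}=m$ and that for $\overrightarrow{AC}$ to $\frac{1/m}{-1}=-\frac{1}{m}$, confirming both values without appealing to orthogonality.
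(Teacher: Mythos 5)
Your proof is correct and takes essentially the same route the paper intends: Theorem \ref{theoremD} identifies $d_2$ with the line $\mathbf{r_2}=t\,\mathbf{v_{eye}}$ through the origin $E$ and the pole, Corollary \ref{corolE} then gives its slope as $m$, and the orthogonality $d_1 \bot d_2$ from Theorem \ref{theoremC} forces the slope of $d_1$ to be $-\frac{1}{m}$. Your independent check via the components in \eqref{eq22} is a sensible confirmation but does not constitute a different approach.
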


\begin{theorem}\label{theoremE}
 For any m-spiral, the ratio between the length of diagonals $d_1$ and $d_2$ is $m$, i.e.:
 
 \begin{equation}\label{eq33}
   \frac{\|\mathbf{d_1}\|}{\|\mathbf{d_2}\|}=m
 \end{equation}
 \vspace{0.4cm}

\end{theorem}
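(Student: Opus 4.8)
The plan is to exploit the relationship between $\mathbf{d_1}$ and $\mathbf{d_2}$ already established in the proof of Theorem~\ref{theoremC}, rather than recomputing both norms from scratch. Recall that there the vector $\mathbf{d3}$ was defined as the image of $\mathbf{d2}$ under a $90^\circ$ rotation, and it was shown that $\mathbf{d3}=\tfrac{1}{m}\mathbf{d1}$. I would simply observe that a $90^\circ$ rotation is an orthogonal transformation and therefore an isometry, so it preserves Euclidean length: $\|\mathbf{d3}\|=\|\mathbf{d2}\|$.

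Combining the two facts gives the result in one line. From $\mathbf{d3}=\tfrac{1}{m}\mathbf{d1}$ we get $\|\mathbf{d3}\|=\tfrac{1}{m}\|\mathbf{d1}\|$, and since $\|\mathbf{d3}\|=\|\mathbf{d2}\|$ we conclude $\|\mathbf{d2}\|=\tfrac{1}{m}\|\mathbf{d1}\|$, that is, $\frac{\|\mathbf{d_1}\|}{\|\mathbf{d_2}\|}=m$, as claimed in \eqref{eq33}.

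As an independent check, and for readers who prefer a self-contained argument, I would also carry out the direct computation from the coordinate expressions \eqref{eq22}. Factoring the common term $\left(1+\tfrac{1}{m}\right)$ out of each component, one finds $\|\mathbf{d1}\|^2=L^2\left(1+\tfrac{1}{m}\right)^2\left(1+\tfrac{1}{m^2}\right)$ and $\|\mathbf{d2}\|^2=\tfrac{L^2}{m^2}\left(1+\tfrac{1}{m}\right)^2\left(1+\tfrac{1}{m^2}\right)$. The two expressions differ only by the factor $1/m^2$, so taking the ratio and then the square root yields $m$ directly, with all the $L$- and $m$-dependent clutter cancelling.

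There is essentially no hard step here: once Theorem~\ref{theoremC} is in hand the claim is an immediate consequence of the fact that rotations preserve length, and the only thing to be careful about is recording that isometry invariance explicitly rather than leaving it implicit. The direct route is equally routine; its sole subtlety is recognising the common factorisation so that the cancellation is transparent rather than relying on brute expansion.
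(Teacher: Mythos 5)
Your main argument is correct but takes a genuinely different route from the paper. The paper proves Theorem~\ref{theoremE} by brute force: it writes out $\|\mathbf{d_1}\|$ and $\|\mathbf{d_2}\|$ from the coordinates in \eqref{eq22}, combines each radicand over a common structure, and factors both as $\sqrt{(m+1)^2(m^2+1)}$ times $\tfrac{L}{m^2}$ and $\tfrac{L}{m^3}$ respectively, so the ratio collapses to $m$. You instead recycle the identity $\mathbf{d3}=\tfrac{1}{m}\mathbf{d1}$ already established in the proof of Theorem~\ref{theoremC}, where $\mathbf{d3}$ is the $90^\circ$ rotation of $\mathbf{d2}$, and add the single observation the paper never exploits: rotations are isometries, so $\|\mathbf{d3}\|=\|\mathbf{d2}\|$, whence $\|\mathbf{d2}\|=\tfrac{1}{m}\|\mathbf{d1}\|$ (using $m>1>0$ to pull the scalar out of the norm). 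This is a genuine economy --- it shows Theorems~\ref{theoremC} and~\ref{theoremE} are two faces of the same linear-algebra fact, namely that $\mathbf{d2}$ is obtained from $\mathbf{d1}$ by a rotation composed with a dilation of factor $\tfrac{1}{m}$ --- at the cost of making Theorem~\ref{theoremE} logically dependent on the computation inside Theorem~\ref{theoremC}'s proof. Your ``independent check'' is essentially the paper's own proof, just organized with the cleaner factorization $\|\mathbf{d1}\|^2=L^2\left(1+\tfrac{1}{m}\right)^2\left(1+\tfrac{1}{m^2}\right)$ and $\|\mathbf{d2}\|^2=\tfrac{L^2}{m^2}\left(1+\tfrac{1}{m}\right)^2\left(1+\tfrac{1}{m^2}\right)$, which is equivalent to the paper's factored radicals; both computations are correct.
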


\begin{proof}
 Given vector definitions \eqref{eq22}, we have:
 \begin{equation*}
  \begin{aligned}
  \frac{\|\mathbf{d_1}\|}{\|\mathbf{d_2}\|} &= \frac{\sqrt{\left(-L-\frac{L}{m}\right)^2+\left(\frac{L}{m}+\frac{L}{m^2}\right)^2}}{\sqrt{\left(\frac{L}{m^2}+\frac{L}{m^3}\right)^2+\left(\frac{L}{m}+\frac{L}{m^2}\right)^2}}\\[0.5em]
  \frac{\|\mathbf{d_1}\|}{\|\mathbf{d_2}\|} &= \frac{\sqrt{\frac{L^2(m+1)^2}{m^2}+\frac{L^2(m+1)^2}{m^4}}}{\sqrt{\frac{L^2(m+1)^2}{m^6}+\frac{L^2(m+1)^2}{m^4}}}\\[0.5em]
  \frac{\|\mathbf{d_1}\|}{\|\mathbf{d_2}\|} &= \frac{\frac{L}{m^2}\sqrt{\left(m+1\right)^2\left(m^2+1\right)}}{\frac{L}{m^3}\sqrt{\left(m+1\right)^2\left(m^2+1\right)}}\\[0.5em]
  \frac{\|\mathbf{d_1}\|}{\|\mathbf{d_2}\|} &= m
  \end{aligned}
 \end{equation*}
 \vspace{0.1cm}
 
\end{proof}

\section{Conclusions}

Some examples of theorem \ref{theoremF} can be seen in figure \ref{Fig8a}. This theorem may be a natural consequence from self-similarity in logarithmic-like spirals. In our context, this implies that properties related to one square will be inherited by the remaining. Thereby, if circumscribed circle around one square intercepts the pole then all the others will do.

Dotted lines in figure \ref{Fig8a} represent the interval of $2 \leq m < 1$. We noticed an asymmetry: although this interval comprises half of first square side length, it does not correspond to half of the arc covering this side. Furthermore, the tendency to upper right vertex as $m \to \infty$ is very fast: with only $m=60$ the pole is already near this convergent point. We should also remark that $m$ is not well-defined for 1. When $m=1$ the whirling squares do not decrease and the arcs do not form a spiral but they close themselves in a circle (review figure \ref{Fig4a}).   

\begin{wrapfigure}{r}{0.4\textwidth}
  \vspace{-20pt}
  \captionsetup{font=scriptsize}
  %\captionsetup{font=tiny}
  \caption*{Image free: Chris 73 at Wikimedia Commons}
  \begin{center}
    \includegraphics[width=0.3\textwidth]{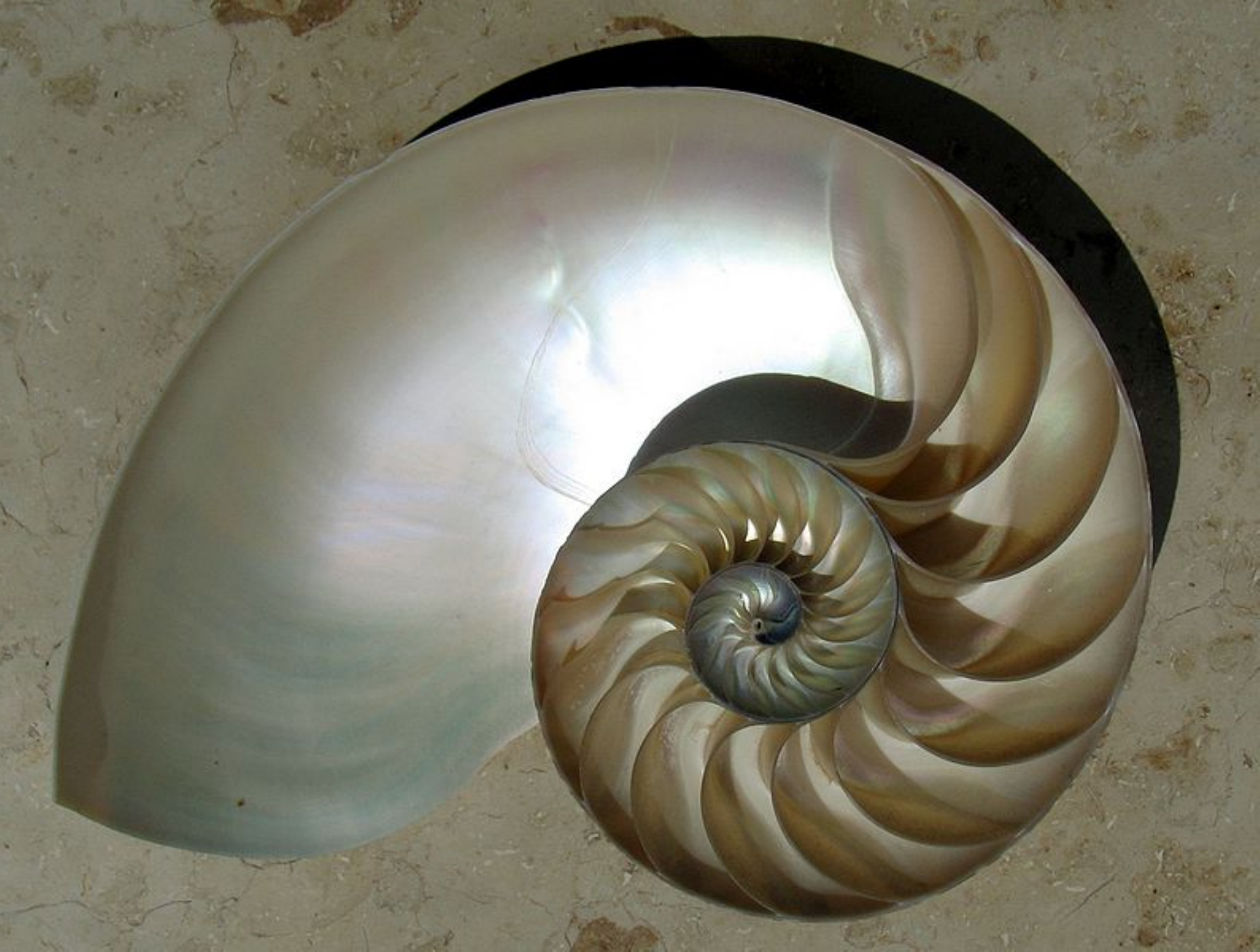}
  \end{center}
  \vspace{-2pt}
  \captionsetup{font=small}
  \caption[text]{Nautilus shell spiral.}
  \vspace{-10pt}
  \label{Fig9a}
\end{wrapfigure}

The classical golden constructors are not alone. There is a whole family (or families) of other amazing sections and spirals \cite{Stakhov_a,Fowler_a,Falbo_a,Spinadel_a}. We see here that many of the fascinating properties attributed to golden rectangle and its spiral can be extended to generalized m-spirals. Our obsession with $\phi$ should be reviewed. In this sense, there is a growing debate about what is the real scope of $\phi$-based mathematics in nature and arts \cite{Sharp_a,Stakhov_b,Cooke_a,Eydt_a,Shipman_a,Markowsky_a,Fonseca_a}. One frequently cited example is the spiral shell of cephalopod \emph{Nautilus} spp. We see in figure \ref{Fig9a} that better $m$ values may exist to fit the Nautilus spiral than $m=\phi$. Indeed, Falbo \cite{Falbo_a} found an average $m=1.33$ (close to Stakhov's 4-Fibonacci number - $\alpha_4=1.324...$; see figure \ref{Fig4a}) and a min-max interval $[1.24,1.43]$ that does not cover $\phi$. 

Allow me to finish with a short digression. The mathematician Clifford Pickover poetically called \emph{eye of god} the pole of a Fibonacci spiral, due to ``divine'' properties historically attributed to golden ratio \cite{Pickover_a}. We may have extended this metaphor, demonstrating that the \emph{eyes of god} are infinite, that all of them are intercepted by infinite circles and lie on a ``special'' circle.

\medskip

\noindent MSC2010: 51M99, 11Z99

\end{document}